\newtheorem{theorem}{Theorem}[section]  
\newtheorem{definition}{Definition}
\newtheorem{proposition}{Proposition}
\let\OLDthebibliography\thebibliography
\renewcommand\thebibliography[1]{
  \OLDthebibliography{#1}
  \setlength{\parskip}{0pt}
  \setlength{\itemsep}{0pt plus 0.3ex}
}
\numberwithin{equation}{section}
\newcommand{\sgn}{{\rm sgn}}
\begin{document}\topmargin=0mm 
\sloppy

\title{Generalized Opinion Dynamics Model for Social Trust Networks in Opinion Maximization}
%
\name{Changxiang He, Jiayuan Zeng, Shuting Liu, Guang Zhang, Xiaofei Qin, Xuedian Zhang, Lele Liu }
\address{University of Shanghai for Science and Technology\\changxiang-he@163.com, 854789302@qq.com,\\2314914087@qq.com, g.zhang@usst.edu.cn, xiaofei.qin@usst.edu.cn\\obmmd\_zxd@163.com, leliu@usst.edu.cn}

\maketitle

\begin{abstract}
In this paper, we propose a generalized opinion dynamics model (GODM), which can dynamically compute each person's expressed opinion,
to solve the internal opinion maximization problem for social trust networks. In the model, we propose a new, reasonable and 
interpretable confidence index, which is determined by both person’s social status and the evaluation around him. By using the 
theory of diagonally dominant, we obtain the optimal analytic solution of the Nash equilibrium with maximum overall opinion. 
We design a novel algorithm to maximize the overall with given budget by modifying the internal opinions of people in the social 
trust network, and prove its optimality both from the algorithm itself and the traditional optimization algorithm-ADMM algorithms 
with $l_1$-regulations. A series of experiments are conducted, and the experimental results show that our method is superior to 
the state-of-the-art in four datasets. The average benefit has promoted $67.5\%$, $83.2\%$, $31.5\%$, and $33.7\%$ on four datasets, 
respectively.
\end{abstract}
\begin{keywords}
 Opinion Maximization, Confidence index,  Social Trust Networks, Optimization, Limited budget
\end{keywords}

\section{Introduction and motivation}

The study of how people form their opinions through social interactions with others has long been the subject of research 
in the field of social science. On social networks, users can express their opinions on a product or a real-time new event. 
By quantifying these opinions, and forming the expressed opinions through social interactions with others, the overall 
opinion of a network on the product or event can be determined. This allows one to quantitatively study interactions of 
opinions on a large scale, and to improve them through targeted interventions. Generally speaking, for the merchant, 
the overall opinion of the network on the product as high as possible is expected. In order to realize it, if possible,
the overall opinion can be adjusted through a certain budget. For the government, when a positive real-time new event 
occurs, it hopes that the overall opinion of this event as high as possible. Conversely, when a negative one happens,
the government hopes that the overall opinion as low as possible. It is obviously that the minimization of the overall 
opinion under the premise of a certain budget can be achieved by converting the original network to a negative network.
  
\begin{figure}[t]
    \centerline{\epsfig{figure=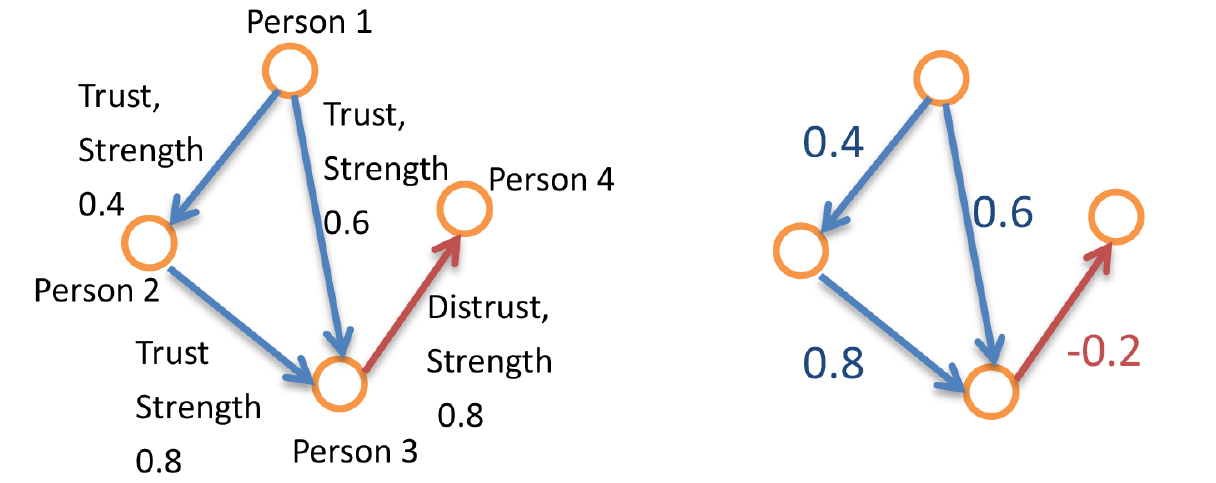,width=7cm}}
    \caption{A social trust network (left) and its graph representation (right).}
\label{fig:res2}
\end{figure}

\subsection{Background}
Social trust network is a relatively stable network of relationships among certain groups (people, enterprises
and organizations). A social trust network and its graph representation can be seen in Fig. \ref{fig:res2}.
Many opinion formation models have been proposed and studied based on the influence through social interaction, 
and the issue of opinion or influence maximization has been widely studied in \cite{2015Online} and \cite{2020Opinion}.
Chen et al. \cite{2015Online} proposed an algorithm to find $K$ nodes which perform the greatest common influence 
on the network. Kong et al. \cite{Liu2018} considered an issue called active opinion maximization, whose goal is 
to find a subset of seed users to maximize the overall opinion dissemination of the target product in a multi-round 
campaign. In addition to considering interventions that directly modify people's internal opinions, Abebe et al. 
\cite{2018Opinion} also considered interventions that to change people's sensitive to persuasion. The goal was to 
modify individual sensitivities to maximize the overall opinion at Nash equilibrium. And \cite{2020Opinion} proposed 
a new continuous-valued opinion dynamics model for social trust networks, which was more consistent with real scenarios 
and achieved good results. But there is still something can be improved in \cite{2020Opinion}, it only considered 
the weighted average of the opinions of person's internal opinions and those of the people around it. However, 
in real life, everyone's personalities is different. That is to say, the influence of other people's opinion is 
supposed to vary for different person. Therefore, we have made some improvements on the basis of \cite{2020Opinion} 
and achieved excellent results.


\subsection{Contributions in this paper}
\begin{itemize}\setlength{\itemsep}{1pt}
  \item We develop a generalized opinion dynamics model for social trust networks to precisely calculate the opinion 
everyone express. And we introduce a novel confidence index $\alpha_i$, which is determined by a person's social status 
and the evaluation of people around him. In the traditional dynamics model, a person's opinion is usually expressed as 
a weighted average of his internal opinion and the opinions of those around them. For different human groups, the range 
of $\alpha_i$ will be different. For example, when there are mostly young people in the group, the value of $\alpha_i$ 
tends to be relatively low, and when most people in the group are elderly, the value of $\alpha_i$ will be higher. From 
the theory and experiments, we can see that when $\alpha_i=1/2$, our model GODM is the same one as the model in \cite{2020Opinion}.
  \item By using the theory of diagonally dominant and Friedkin-Johnsen model, we give the analytic solution in GODM 
when the Nash equilibrium is reached.
  \item We introduce a novel algorithm and prove that the result is optimal. In addition, we introduce a mature optimization 
algorithm named ADMM, and solve the problem with special form derived from this algorithm-$l_1$ regulation and achieve 
the same result as our algorithm, which further verifies the optimality of our algorithm.
\end{itemize}

\section{ Preliminaries}
\subsection{Definition and proposition used in our model}

\begin{definition}[Strictly diagonally dominant]
Square matrix $A=(a_{ij})_{n\times n}$ is strictly diagonally dominant if the absolute value of its main diagonal element 
is greater than the sum of the absolute values of the other elements in the row, i.e., $|a_{ii}|>\sum_{j\not=i}|a_{ij}|$,
$i=1,\ldots,n$.
\end{definition}

\begin{proposition}\label{prop:diagonally-dominant}
Let $A=(a_{ij})_{n\times n}$ be a square matrix that is strictly diagonally dominant, then $\det(A)\neq 0$.
\end{proposition}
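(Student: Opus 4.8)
The plan is to prove the contrapositive via the classical Levy–Desplanques argument: assume $\det(A)=0$ and derive a contradiction with strict diagonal dominance. If $\det(A)=0$, the system $Ax=0$ has a nonzero solution $x=(x_1,\ldots,x_n)^T$. Pick an index $k$ at which $|x_k|$ is maximal, so $|x_k|\geq|x_j|$ for all $j$ and $|x_k|>0$.

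From the $k$-th equation $\sum_{j=1}^n a_{kj}x_j=0$ we isolate the diagonal term: $a_{kk}x_k=-\sum_{j\neq k}a_{kj}x_j$. Taking absolute values and applying the triangle inequality gives
\[
|a_{kk}|\,|x_k|=\Bigl|\sum_{j\neq k}a_{kj}x_j\Bigr|\leq\sum_{j\neq k}|a_{kj}|\,|x_j|\leq\Bigl(\sum_{j\neq k}|a_{kj}|\Bigr)|x_k|.
\]
Dividing through by $|x_k|>0$ yields $|a_{kk}|\leq\sum_{j\neq k}|a_{kj}|$, which directly contradicts the strict diagonal dominance hypothesis $|a_{kk}|>\sum_{j\neq k}|a_{kj}|$. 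Hence $\det(A)\neq 0$.

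**The only subtlety** — and the step I would be most careful about — is the justification that a maximal-modulus index $k$ exists and that $|x_k|>0$: this is immediate because $x$ is a finite nonzero vector, so at least one coordinate is nonzero and the maximum over finitely many nonnegative reals is attained. Everything else is a one-line triangle-inequality estimate, so there is no real obstacle; the argument is elementary and self-contained, relying only on the definition of strict diagonal dominance already stated in the excerpt.
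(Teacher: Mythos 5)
Your proof is correct and is essentially identical to the paper's: both assume $\det(A)=0$, take a nonzero solution of $Ax=0$, pick a coordinate of maximal modulus, and contradict strict diagonal dominance in row $k$ via the triangle inequality (the paper merely arranges the same chain of inequalities in one display rather than dividing by $|x_k|$ at the end). No gaps.
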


\begin{proof}
Suppose $\det(A)=0$, then the system of linear equations $Ax=0$ has non-zero solution $x=(x_1,x_2,\ldots,x_n)^{\mathrm{T}}$. 
Suppose the largest one of $|x_1|,\ldots,|x_n|$ is $|x_k|$, then $|x_k|>0$. Known by $\sum_{j=1}^na_{kj}x_j=0$, we have 
  \begin{align*}
     \Big|\sum_{j\neq k}a_{kj}x_j\Big| & =|-a_{kk}x_k|=|a_{kk}| |x_k|>|x_k| \sum_{j\neq k}|a_{kj}| \\
     & \geq \sum_{j\neq k}|a_{kj}||x_j|\geq \Big|\sum_{j\neq k}a_{kj}x_j\Big|,
  \end{align*}
which is a contradiction.
\end{proof}

\subsection{Friedkin-Johnsen Model}
We can represent a social trust network with a directed signed weighted graph $G=(V,E)$, where $V=\{1,2,\ldots,n\}$ is 
the set of nodes and $E$ is the set of arcs (directed edges). Each directed edge from $i$ to $j$, $(i,j)\in E$, is 
associated with the weight $\omega_{ij}$, which shows the extent of the relationship. Let $A=(a_{ij})$ be the 
adjacency matrix of $G$, where $a_{ij}=\omega_{ij}$ if $(i,j)\in E$, $a_{ij}=0$ otherwise. Let $D$ be a diagonal 
matrix with diagonal entry $d_{ii}=\sum_{j=1}^{n}|a_{ij}|$, and $L=D-A$ be the Laplacian matrix.

The Friedkin-Johnsen model \cite{Friedkin1990Social} is a famous dynamics model of continuous-valued opinion.
In this model, each node $i\in V$ has a fixed internal opinion $s_i$ that stays the same. During each 
iteration, the expressed opinion $z_i$ of node $i$ is updated as follows:
\begin{equation}
    z_i=\frac{s_i+\sum_{j\in N^+(i)}\omega_{ij}z_j}{1+\sum_{j\in N^+(i)}\omega_{ij}},
    \label{1}
\end{equation}
where the values of $s_i$ and $z_i$ are in the interval $[-1,1]$, $N^+(i)$ denotes the set of successors of node $i$, 
and $\omega_{ij}$ denotes the weight of the directed edge $(i,j)$. In a social game, the result of repeated average converges to 
the Nash equilibrium \cite{2015How}.

\subsection{Generalized Opinion Dynamics Model (GODM) For Social Trust Networks}
It can be seen from the Friedkin-Johnsen model that for each node $i$, its expressed opinions are the weighted average 
of its internal opinions and the opinions of the people around it. However, in real life, people's personalities are 
different, which lead to the different degrees they are affected by other people. So we develop a generalized opinion 
dynamics model for social trust networks.

The consensus cost function of the social game is computed as follows:
\begin{equation}
    c(z_i)\!=\!\alpha_i(z_i\!-\!s_i)^2\!+\!(1\!-\!\alpha_i)\!\!\sum_{j\in N^+(i)}\!\! |\omega_{ij}|(z_i\!-\!\sgn(i,j)z_j)^2,
    \label{2}
\end{equation}
where $\sgn(i,j)$ denotes the sign of nonzero $\omega_{ij}$, and $\alpha_i$ denotes the confidence index of node $i$. The 
confidence index $\alpha_i$ reflects how confident everyone in a social network is. In social networks, a higher $\alpha_i$ 
indicates that a person is more likely to listen to his or her own thoughts when expressing opinions, rather than being 
easily influenced by others. In a real social network, the $\alpha_i$ value is related to its importance in the network 
and how others evaluate it.

\begin{theorem}\label{thm:ab}
Let $\Lambda=\text{diag}\,(\alpha_1,\ldots,\alpha_n)$ be a diagonal matrix, where $\alpha_i$ is the confidence index of 
node $i$ in a social trust network, and let $L$ be the Laplacian matrix of the social trust network. Then $\Lambda+(I-\Lambda)L$ 
is invertible.
\end{theorem}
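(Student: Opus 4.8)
The plan is to show that the matrix $M := \Lambda+(I-\Lambda)L$ is strictly diagonally dominant and then quote Proposition \ref{prop:diagonally-dominant}. First I would write out the entries of $M$ explicitly. Recall $L=D-A$ with $d_{ii}=\sum_{j=1}^{n}|a_{ij}|$; assuming (as is standard) that the network has no self-loops so that $a_{ii}=0$, this gives $L_{ii}=d_{ii}=\sum_{j\neq i}|a_{ij}|$ and $L_{ij}=-a_{ij}$ for $j\neq i$. Since $\Lambda$ is diagonal, left-multiplication of $L$ by $I-\Lambda$ merely scales row $i$ by the factor $1-\alpha_i$, and adding $\Lambda$ only perturbs the diagonal. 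Hence, for each $i$,
\begin{equation}
M_{ii}=\alpha_i+(1-\alpha_i)\sum_{j\neq i}|a_{ij}|, \qquad M_{ij}=-(1-\alpha_i)\,a_{ij}\ \ (j\neq i).
\end{equation}

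Next I would check strict diagonal dominance row by row. Because the confidence index satisfies $\alpha_i\in(0,1)$ we have $1-\alpha_i>0$, so $M_{ii}=\alpha_i+(1-\alpha_i)\sum_{j\neq i}|a_{ij}|>0$ and therefore $|M_{ii}|=M_{ii}$. The off-diagonal absolute row sum is $\sum_{j\neq i}|M_{ij}|=(1-\alpha_i)\sum_{j\neq i}|a_{ij}|$, and subtracting yields
\[
|M_{ii}|-\sum_{j\neq i}|M_{ij}|=\Big(\alpha_i+(1-\alpha_i)\sum_{j\neq i}|a_{ij}|\Big)-(1-\alpha_i)\sum_{j\neq i}|a_{ij}|=\alpha_i>0 .
\]
This is exactly the strict diagonal dominance condition of Definition 1, holding in every row. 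Applying Proposition \ref{prop:diagonally-dominant} to $M$ then gives $\det(M)\neq 0$, i.e., $\Lambda+(I-\Lambda)L$ is invertible.

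I do not anticipate a genuine obstacle; the only step requiring care is the sign bookkeeping that lets us replace $|M_{ii}|$ by $M_{ii}$, which relies on $0<\alpha_i<1$ (so that the diagonal scaling factor $1-\alpha_i$ is nonnegative and the surplus term $\alpha_i$ is strictly positive). Conceptually, the mechanism is that the Laplacian $L$ is only \emph{weakly} diagonally dominant — each of its rows is balanced, $L_{ii}=\sum_{j\neq i}|L_{ij}|$ — and adding the positive diagonal contribution $\Lambda$ (with all $\alpha_i>0$) tips every row strictly over the threshold by the amount $\alpha_i$. If one wished to allow self-loops with $a_{ii}\ge 0$, the same computation goes through with the diagonal gap being at least $\alpha_i$, so the conclusion is unchanged.
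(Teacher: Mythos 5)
Your proof is correct and follows essentially the same route as the paper's: compute the diagonal entry $\alpha_i+(1-\alpha_i)\sum_{j}|\omega_{ij}|$ and the off-diagonal absolute row sum $(1-\alpha_i)\sum_{j}|\omega_{ij}|$, observe the gap is $\alpha_i>0$ under the assumption $0<\alpha_i<1$, and invoke the strict-diagonal-dominance proposition. Your write-up is in fact more careful than the paper's about the sign bookkeeping and the implicit hypothesis $\alpha_i\in(0,1)$.
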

  
\begin{proof}
It is clear that 
\[
(\Lambda+(I-\Lambda)L)_{ii}=\alpha_i+(1-\alpha_i)\sum_{j\in N^+(i)}|\omega_{ij}|.
\] 
The sum of the absolute values of the off-diagonal elements of $\Lambda+(I-\Lambda)L$ is equal to the sum of the absolute 
values of the off-diagonal elements of $(I-\Lambda)L$, which is $(1-\alpha_i)\sum_{j\in N^+(i)}|\omega_{ij}|$. For 
$0<\alpha_i<1$, $\Lambda+(I-\Lambda)L$ is the diagonally dominant matrix, as desired.
\end{proof}

\begin{proposition}\label{pro:ab}
Let $s$ be the internal opinion vector, whose $i$-th component is $s_i$, and $z^*$ be the expressed opinion vector at the 
Nash equilibrium. We have $z^*=(\Lambda+(I-\Lambda)L)^{-1}\Lambda s$.  
\end{proposition}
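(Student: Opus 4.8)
The plan is to derive the Nash equilibrium condition from the consensus cost function \eqref{2} and then recognize the resulting linear system as exactly $(\Lambda+(I-\Lambda)L)z=\Lambda s$, whose solvability is guaranteed by Theorem \ref{thm:ab}. First I would fix a node $i$ and treat $c(z_i)$ as a function of the single variable $z_i$ (with all other $z_j$ held fixed, as is appropriate for a Nash best-response). Since $c(z_i)$ is a sum of squares with positive coefficients $\alpha_i>0$ and $(1-\alpha_i)|\omega_{ij}|\geq 0$, it is a strictly convex quadratic in $z_i$, so its unique minimizer is found by setting the derivative to zero:
\[
\frac{\partial c(z_i)}{\partial z_i}=2\alpha_i(z_i-s_i)+2(1-\alpha_i)\sum_{j\in N^+(i)}|\omega_{ij}|\bigl(z_i-\sgn(i,j)z_j\bigr)=0.
\]
Rearranging gives $\bigl(\alpha_i+(1-\alpha_i)\sum_{j\in N^+(i)}|\omega_{ij}|\bigr)z_i-(1-\alpha_i)\sum_{j\in N^+(i)}|\omega_{ij}|\sgn(i,j)z_j=\alpha_i s_i$.

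Next I would assemble these $n$ scalar equations into matrix form. The key bookkeeping step is to check that the coefficient matrix is precisely $\Lambda+(I-\Lambda)L$: the coefficient of $z_i$ is $\alpha_i+(1-\alpha_i)\sum_{j\in N^+(i)}|\omega_{ij}|$, which matches the diagonal entry computed in the proof of Theorem \ref{thm:ab}; and since $L=D-A$ with $a_{ij}=\omega_{ij}=\sgn(i,j)|\omega_{ij}|$, the off-diagonal $(i,j)$ entry of $(I-\Lambda)L$ is $-(1-\alpha_i)a_{ij}=-(1-\alpha_i)|\omega_{ij}|\sgn(i,j)$, which matches the coefficient of $z_j$ on the left-hand side after moving it across. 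The right-hand side vector is $\Lambda s$. Hence the Nash equilibrium vector $z^*$ satisfies $(\Lambda+(I-\Lambda)L)z^*=\Lambda s$.

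Finally, Theorem \ref{thm:ab} tells us $\Lambda+(I-\Lambda)L$ is invertible (for $0<\alpha_i<1$), so I can left-multiply by its inverse to conclude $z^*=(\Lambda+(I-\Lambda)L)^{-1}\Lambda s$, and uniqueness of the equilibrium follows as well. The main obstacle I anticipate is not conceptual but notational: one must be careful with the $\sgn(i,j)$ factors and with the convention that $a_{ij}=\omega_{ij}$ can be negative, to make sure the signs in $(I-\Lambda)L$ line up correctly with the signs coming out of the derivative of the squared term $(z_i-\sgn(i,j)z_j)^2$. A secondary point worth a sentence is justifying that the simultaneous first-order conditions genuinely characterize the Nash equilibrium — this is where the convexity of each $c(z_i)$ in its own variable, together with the interpretation of repeated best-response averaging converging to equilibrium (as noted after \eqref{1}), does the work.
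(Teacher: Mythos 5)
Your proposal is correct and follows essentially the same route as the paper: differentiate the cost \eqref{2} in $z_i$, set the first-order condition to zero, assemble the $n$ equations into $(\Lambda+(I-\Lambda)L)z^*=\Lambda s$, and invert using Theorem \ref{thm:ab}. Your added remarks on strict convexity and on checking that the off-diagonal entries $-(1-\alpha_i)|\omega_{ij}|\sgn(i,j)$ of $(I-\Lambda)L$ match the derivative terms are sound and slightly more careful than the paper's own writeup, but do not constitute a different argument.
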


\begin{proof}
At the Nash equilibrium, we compute expressed opinion $z_i^*$ that minimizes its cost for each node $i$, i.e., 
$c'(z_i^*)=0$. Hence,
\[
c'(z_i^*)\!\!=\!2\alpha_i(z_i^*-s_i)\!+2(1-\alpha_i)\!\!\!\sum_{j\in N^+(i)}\!\!|\omega_{ij}|(z_i^*-\sgn(i,j)z_j^*\!)\!\!=\!\!0. 
\]
It follows that
\begin{equation}
z_i^*=\frac{\alpha_i s_i+(1-\alpha_i)\sum_{j\in N^+(i)}\omega_{ij}z_j^*}{\alpha_i+(1-\alpha_i)\sum_{j\in N^+(i)}|\omega_{ij}| }.
\label{3}
\end{equation}
We rewrite Equation \eqref{3} as:
\[
\Big(\alpha_i+(1-\alpha_i)\sum_{j\in N^+(i)}|\omega_{ij}|\Big)z^*= \alpha_i s_i+(1-\alpha_i)\sum_{j\in N^+(i)}\omega_{ij}z_j^*.
\]
By introducing the opinion vectors $s$ and $z^*$, we can transform Equation \eqref{3} as it’s matrix form:
$(\Lambda+(I-\Lambda)L)z^*=\Lambda s$. According to the Theorem \ref{thm:ab}, $\Lambda+(I-\Lambda)L$ is invertible. Therefore, 
we obtain that $z^*=(\Lambda+(I-\Lambda)L)^{-1}\Lambda s$.
\end{proof}

\textbf{PageRank:}
PageRank is a technology which is calculated by search engines based on the hyperlinks between pages \cite{1998page}.
It is used to show the relevance and importance of web pages.

%
%
%
%

For a page $v_i$: $r(v_i)= \frac{1+d}{N}+d\big(\frac{r(u_1)}{c(u_1)}+\cdots+\frac{r(u_n)}{c(u_n)}\big)$, where $r(v_i)$ denotes the 
PageRank value of page $v_i$, $u_i$ refers to one of the pages that points to $v_i$, $c(u_i)$ denotes the number of edges 
that point to other pages for page $u_i$. $N$ is the number of nodes in the social network. $d$ is the damping coefficient, 
usually $d=0.85$. In a large network, the PageRank value for each point may be very small. So we normalized the PageRank 
values of points by dividing by the maximum PageRank value among all points.

\textbf{Mean evaluation:}
In social networks, each node plays two roles, the output node and the input node. As an input node, its edge weight and 
edge sign represent the evaluation that other nodes on it. Here, for a node $i$, we use mean evaluation to express the 
average evaluation that other nodes on it, so for node $i$, the calculation formula of mean evaluation that others on it 
can be expressed as: $m(i)=\frac{\sum_{j\in N^-(i)}\omega_{ij}}{d_i^-}$, where $m(i)$ denotes the mean evaluation of node 
$i$, $N^-(i)$ is the predecessors of node $i$, and $d_i^-$ denotes the amount of the predecessors of node $i$.

\textbf{Compute the confidence index $\alpha_i$:}
Here, let's consider two scenarios. First, the confidence index $\alpha_i$ is fixed, which doesn't change with the structure of the graph.
Second, the confidence index $\alpha_i$ is adjustable. Here we let the confidence index $\alpha_i$ weighted by PageRank value and mean evaluation, 
which is calculated by the following formula:
\begin{equation}
  \alpha_i=qm(i)+(1-q)r(i),  \label{4}
\end{equation}
where $m(i)$ and $r(i)$ are the mean evaluation and PageRank value of node $i$, respectively, and $0\leq q\leq 1$ is a 
constant. When $q=1$, the value of $\alpha_i$ is completely determined by the PageRank value, and when $q=0$, the value
of $\alpha_i$ is determined by what other people say about it. In this case, let's take $q=0.5$.

In real social trust networks, such as a network of users’ reviews of a website. Due to the trust and distrust between 
users, this may result in a negative mean evaluation of a user. And the PageRank value may be very small, which may 
result in the calculated $\alpha_i$ value, the confidence index, being negative. This is unreasonable, so we introduce 
the Rectified Linear Unit, which restricts the value of $\alpha_i$ to $[0,1]$. So Equation \eqref{4} becomes the following:
\begin{equation}
\alpha_i=RELU(qm(i)+(1-q)r(i)), \label{5}
\end{equation}
where $RELU(x)=\max\{0,x\}$. From Equation \eqref{5}, we can see that the more important a person is in the network and 
the higher people's evaluation of him, the higher his confidence index will be.

\begin{definition}[Overall opinion]
The overall opinion $p(z^*)$ of a social trust network is the sum of the expressed opinions at Nash equilibrium:
$p(z^*)=\sum_i z^*_i=\bm{1}^{\mathrm{T}} (\Lambda+(I-\Lambda)L)^{-1}\Lambda s$, where $\bm{1}$ is the all-ones column vector.
\end{definition}

\section{Internal Opinion Maximization in Social Trust Networks}

\begin{definition}[Internal opinion problem]
Given a social trust network $G$, the internal opinion vector $s$, and the budget amount $\mu$, our goal is to find out 
the modification of internal opinions $\Delta s$ to maximize the overall opinion $p(z^*)$:
\begin{align*}
  & \max~\bm{1}^{\mathrm{T}} (\Lambda+(I-\Lambda)L)^{-1}\Lambda(s+\Delta s), \\
  & \text{s.t.}~
    \begin{cases}
      -1\leq s_i+\Delta s_i\leq1, ~(i=1,2,\ldots,n), \\
      \|\Delta s\|_1\leq\mu.
    \end{cases}            
\end{align*}
\end{definition}

According to the Proposition \ref{pro:ab}, the expressed opinion vector is in the column space of $\Lambda+(I-\Lambda)L$ 
and the coordinates are stored in the vector of internal opinions. Then, we name $g=\bm{1}^{\mathrm{T}} (\Lambda+(I-\Lambda)L)^{-1}\Lambda$ 
the contribution index vector. As $p(z^*)=\bm{1}^{\mathrm{T}} (\Lambda+(I-\Lambda)L)^{-1}\Lambda s=gs$, the $i$-th element of $g$  
represents the contribution of the internal opinion of node $i$ to the overall opinion. Specifically, the 
contribution index is defined as follows.

\begin{definition}[Contribution index]
The contribution index $g_i$ of node $i$ represents how $i$'s internal opinion contributes to the overall opinion and is 
quantified by:  
\[
  g_i=(\bm{1}^{\mathrm{T}} (\Lambda+(I-\Lambda)L)^{-1}\Lambda)_i. 
\]
\end{definition}

To maximize the overall opinion, we change the internal opinion with the largest absolute value of the contribution index of the node, then the maximum benefit 
can be obtained under the same budget. Our approach is summarized in Algorithm 1. It is further proved that Algorithm 1 
outputs the optimal result of the internal opinion problem.

\begin{algorithm}  
\SetKwInput{Input}{Input}
\SetKwInOut{Output}{Output}
\SetAlgoLined
\Input{Social trust network $G=(V,E)$; Budget $\mu$; Randomly generate internal opinions $s$.}
\Output{The modification $\Delta s$.}  
\Begin{
  Compute $g_i=(\bm{1}^{\mathrm{T}} (\Lambda+(I-\Lambda)L)^{-1}\Lambda)_i$\;
  Set $sign=\frac{g_i}{|g_i|}$\;
    \While{$\mu>0$}{
      Find the largest $|g_i|$, $cost=1-sign \cdot s_i$\;
        \eIf{$cost>\mu$}{
          $\Delta s=sign\cdot \mu,~\mu=0$\;
        }{
          $\Delta s=sign\cdot cost,~\mu=\mu-cost$\;
      }
    }
}
\caption{Solving Internal Opinion Problem}
\end{algorithm}

\begin{theorem}
Algorithm 1 outputs the optimal result of the internal opinion problem.
\end{theorem}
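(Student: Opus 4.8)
The plan is to recognize that the internal opinion problem is, despite its appearance, a linear program whose feasible set decouples coordinate-wise once one fixes the sign pattern, and that Algorithm 1 is precisely the greedy solver for the resulting fractional knapsack. First I would observe that modifying the internal opinion vector $s$ changes neither the network structure $L$ nor the confidence matrix $\Lambda$, so the contribution index vector $g=\bm{1}^{\mathrm{T}}(\Lambda+(I-\Lambda)L)^{-1}\Lambda$ is a \emph{fixed} vector (well defined by Theorem \ref{thm:ab}). Hence $p(z^*)=g(s+\Delta s)=gs+\sum_i g_i\Delta s_i$, and maximizing $p(z^*)$ over $\Delta s$ is equivalent to maximizing the linear functional $\sum_i g_i\Delta s_i$ subject to $-1-s_i\le\Delta s_i\le 1-s_i$ for all $i$ and $\sum_i|\Delta s_i|\le\mu$.

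Second, I would reduce to a one-signed problem. In any optimal solution one must have $g_i\Delta s_i\ge 0$ for every $i$: if some coordinate had $g_i\Delta s_i<0$, resetting $\Delta s_i:=0$ keeps the box constraint satisfied (since $|s_i|\le1$), does not increase $\|\Delta s\|_1$, and strictly increases the objective by $-g_i\Delta s_i>0$, a contradiction; a coordinate with $g_i=0$ contributes nothing and may likewise be set to $0$. So I may write $\Delta s_i=\sgn(g_i)\,t_i$ with $t_i\ge0$ whenever $g_i\neq0$, and the problem becomes: maximize $\sum_{g_i\neq0}|g_i|\,t_i$ subject to $0\le t_i\le u_i$, where $u_i:=1-\sgn(g_i)s_i\ge0$ is exactly the per-node cap the algorithm records as the cost of saturating node $i$, and $\sum_i t_i\le\mu$. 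This is a fractional knapsack with unit weights and item values $|g_i|$.

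Third, I would prove that the greedy rule ``pour all available budget into an unsaturated coordinate of maximal $|g_i|$, saturate it at $u_i$, and repeat'' is optimal, via the standard exchange argument. Order the coordinates so that $|g_{(1)}|\ge|g_{(2)}|\ge\cdots$; given an optimal $t^*$ and the greedy $t^g$, let $(k)$ be the first index where they differ. Since $t^*$ and $t^g$ spend the same amount on $(1),\dots,(k-1)$, the box bound $t^*_{(k)}\le u_{(k)}$ and the budget bound force $t^*_{(k)}<t^g_{(k)}$. I then transfer mass of size $\delta=t^g_{(k)}-t^*_{(k)}$ into coordinate $(k)$: either from some later coordinate $(m)$ with $t^*_{(m)}>0$, changing the objective by $(|g_{(k)}|-|g_{(m)}|)\delta\ge0$, or, if no later coordinate carries mass, directly from the slack in the $\ell_1$ budget, strictly increasing the objective. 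Repeating finitely many times turns $t^*$ into $t^g$ without decreasing the objective, so $t^g$ is optimal; the degenerate case $\sum_i u_i\le\mu$ forces $t^g$ to saturate every coordinate and is trivially optimal. Finally I would check line by line that Algorithm 1 outputs exactly this greedy $t^g$ (reading $sign$ as $\sgn(g_i)$ for the index $i$ chosen in the current iteration, $cost$ as $u_i$, and the if/else branch as ``spend the remaining budget'' versus ``saturate and continue''), which completes the proof.

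The main obstacle is the reduction step of the second paragraph: one must argue carefully that restricting to $\sgn(\Delta s_i)=\sgn(g_i)$ loses no optimality and that, after this restriction, the box constraints and the $\ell_1$ constraint genuinely decouple into the clean knapsack form above — in particular that the lower box bounds $-1-s_i\le\Delta s_i$ become vacuous once $t_i\ge0$, using $s_i\ge-1$. Everything after that is the textbook fractional-knapsack exchange argument; the only mild care needed there is handling ties in $|g_i|$ (immaterial for the optimal value) and coordinates with $g_i=0$ or $u_i=0$, which the greedy simply skips.
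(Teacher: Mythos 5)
Your proof is correct and rests on the same core idea as the paper's own argument: an exchange argument showing that greedily allocating budget to nodes in descending order of $|g_i|$ is optimal. Your version is in fact considerably more complete than the paper's one-step local exchange, since you make explicit the reduction to a sign-aligned fractional knapsack (including why $g_i\Delta s_i\ge 0$ may be assumed and why the lower box bounds become vacuous) and carry out the full exchange induction down to the greedy solution.
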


\begin{proof}
According to the Algorithm 1, we know the benefit is $g\Delta s$. For any intervened node $i$, we can 
adjust the corresponding modification $\Delta s_i$ to $(\Delta s_i-v)$ if $\Delta s_i>0$, or $(\Delta s_i+v)$ otherwise, 
where $v>0$. Then we have an unused budget $v$ and randomly find a node $j$ that is not intervened before. As the internal opinion of node $j$ is changed, 
the benefit becomes $g \Delta s+(|g_j|-|g_i|)v$. We know from Algorithm 
1 that $|g_j|\leq|g_i|$, thus $g\Delta s\geq g \Delta s+(|g_j|-|g_i|)v$.
\end{proof}

\section{Solve internal opinion maximization problem with the ADMM algorithm}
\subsection{ADMM algorithm}
Alternating Direction Method of Multipliers (ADMM) is a convergent algorithm combining the factorability of dual ascending 
and the advantages of multiplier method, which was proposed by Boyd \cite{2010Distributed}. When there are multiple variables
in the target of a convex optimization problem, it is difficult to solve, so we use variable separation to divide the problem
into several simple subproblems. Then we can solve these subproblems in parallel and coordinate the subproblems to get the 
global solution of original problem. 

\subsection{Steps of the ADMM algorithm}
Suppose the optimization problems are as follows:
\begin{align*}
& \min_{x,z}~ f(x)+g(z) \\
& \text{s.t.}~Ax+Bz=c,
\end{align*}
where $x\in \mathbb{R}^n$, $z\in \mathbb{R}^m$, $A\in \mathbb{R}^{p\times n}$, $B\in \mathbb{R}^{p\times m}$,
$c\in \mathbb{R}^p$. Its augmented Lagrangian function is as follows:
\[
L_\rho(x,z,y)=f(x)+g(z)+y^{\mathrm{T}}(Ax+Bz-c)+\frac{\rho}{2}\|Ax+By-c\|_2^2. 
\]
The update iteration form of ADMM is as follows:
\begin{align*}
x^{k+1} & :=\arg\min_xL_\rho(x,z^k,y^k), \\
z^{k+1} & :=\arg\min_zL_\rho(x^{k+1},z,y^k), \\
y^{k+1} & :=y^k+\rho(Ax+Bz-c),
\end{align*}
where $\rho>0$, $c\in \mathbb{R}^p$ is a dual variable, and this is the original iteration form of ADMM algorithm.

\subsection{Solve the internal opinion problem with the ADMM algorithm}
The internal opinion problem we want to solve is as follows:
\begin{align*}
  & \max~\bm{1}^{\mathrm{T}} (\Lambda+(I-\Lambda)L)^{-1}\Lambda(s+\Delta s) \\
  & \text{s.t.}~
    \begin{cases}
      -1\leq s_i+\Delta s_i\leq 1,~(i=1,2,\ldots,n), \\
      \|\Delta s\|_1\leq\mu.
    \end{cases}             
\end{align*}
Because $\bm{1}^{\mathrm{T}} (\Lambda+(I-\Lambda)L)^{-1}\Lambda s$ is a constant, we can rewrite the problem as:
\begin{align*}
  & \max~\bm{1}^{\mathrm{T}} (\Lambda+(I-\Lambda)L)^{-1}\Lambda \Delta s, \\
  & \text{s.t.}~
    \begin{cases}
      -1-s_i\leq \Delta s_i\leq1-s_i, ~(i=1,2,\ldots,n), \\
      \|\Delta s\|_1\leq\mu.
    \end{cases}               
\end{align*} 
We can translate the problem into the following form:
\begin{align*}
& \min~ l(x)+\lambda \|x\|_1, \\
& \text{s.t.}~ a\leq x\leq b,
\end{align*}
where $l(x)=-\bm{1}^{\mathrm{T}} (\Lambda+(I-\Lambda)L)^{-1}\Lambda$, $x=\Delta s$, $a_i=-1-s_i$, $b_i=-1+s_i$.
We then further translate this into the following form that can be solved by ADMM:
\begin{align*}
  & \min~ l(x)+\lambda \|z\|_1, \\  
  & \text{s.t.}~
     \begin{cases}
       a\leq x\leq b, \\
       x=z.
     \end{cases}       
\end{align*}
In this case, the variables $x$, $z$ and $u$ are updated as follows:
\begin{align*}
x^{k+1} & :=\arg\min_x\big(l(x)+\frac{\rho}{2}\|x-z^k+u^k\|_2^2\big), \\   
z^{k+1} & :=S_{\lambda/\rho}(x^{k+1}+u^k), \\   
u^{k+1} & :=u^k+x^{k+1}-z^{k+1}.   
\end{align*}
From the above formula, the analytic solution of each step iteration can be written:
\begin{align*}
x_i^{k+1} & :=
  \begin{cases}
    a_i, & \text{if}~ z_i^k-u_i^k-\frac{A_i}{\rho}<a_i, \\
    z_i^k-u_i^k-\frac{A_i}{\rho}, & \text{if}~ z_i^k-u_i^k-\frac{A_i}{\rho}<b_i, \\
    b_i, & \text{if}~ b_i<z_i^k-u_i^k-\frac{A_i}{\rho},
  \end{cases} \\  
  z_i^{k+1} & :=
    \begin{cases} 
      z_i^{k+1}-u_i^k-\frac{\lambda}{\rho}, & \text{if}~ z_i^{k+1}-u_i^k>\frac{\lambda}{\rho}, \\
      0, & \text{if}~ -\frac{\lambda}{\rho}<z_i^{k+1}-u_i^k<\frac{\lambda}{\rho}, \\
      z_i^{k+1}-u_i^k+\frac{\lambda}{\rho}, & \text{if}~ z_i^{k+1}-u_i^k<-\frac{\lambda}{\rho},
      \end{cases} \\  
  u_i^{k+1} & :=u_i^k+x_i^{k+1}-z_i^{k+1}. 
\end{align*}
The algorithm converges to the global optimal solution, so after a certain number of iterations, we can get the optimal result.

\section{Experiments}
In this section, we conduct a series of experiments to evaluate the proposed method. We carry out experiments from two 
aspects. First, we compare the results obtained by our model with those in \cite{2020Opinion}. To further illustrate 
the superiority of our model, our method is compared with other heuristic methods, and the experimental results show 
that our method has overwhelming advantages.

\subsection{Datasets}
The datasets we used in the experiments are as follows: (i) Alpha; (ii) OTC \cite{Srijan2017Edge}, 
\cite{2018REV2}; and we normalize the trust values (i.e., edge weights) to the interval $[-1,1]$ on Alpha and OTC. 
(iii) Elec; (iv) Rfa \cite{2014Exploiting}; and the relationships in the last two networks are closely related to trust. 

\subsection{Internal opinion initialization}
We randomly select values that match a particular distribution to initialize the internal opinion vector $s$ to simulate 
different situations. For each network, we use three ways to initialize the internal opinions. (i) We initiate the internal opinions 
to follow a uniform distribution (i.e., $s \sim U( -1, 1)$). (ii) We initiate the internal opinions to follow a standard normal distribution 
(i.e., $s \sim N(0, 1)$). (iii) We initiate the internal opinions of a node to positively correlated with the column connectivity of 
that node (i.e., $s_i \propto\sum_j |a_{ji}|$).

\subsection{The value of $\alpha_i$}
We experimented with two different ways of selecting value of $\alpha_i$: (i) Fixed $\alpha_i$. In this case, we fixed 
the value of $\alpha_i$, which means that everyone's confidence index is the same in the network. (ii) Adjusted $\alpha_i$. 
In this case, $\alpha_i$ changes with the internal structure of each group. When $\alpha_i\in\{2/3, 1/2, 1/3, 1/4\}$, we 
carried out a set of tests, and the experimental results are shown in Table \ref{tab:cap}. 

As can be seen from Table \ref{tab:cap}, when $\alpha_i=1/2$, that is, the proportion of the opinions of everyone inside 
and those of the people around is the same, and the experimental results are consistent with those in \cite{2020Opinion}. 
When $\alpha_i>1/2$, the experimental result is lower than the result in \cite{2020Opinion}, and when $\alpha_i<1/2$, 
the experimental result is higher than the result in \cite{2020Opinion}. This situation is also reasonable in real life. 
A larger $\alpha_i$ indicates that the opinions expressed by each person are less likely 
to be influenced by those around them. The smaller $\alpha_i$ is, the more susceptible the expressed influence is to those 
around it. Therefore, changing the internal opinions of a small number of people will affect the overall opinions of the 
whole network to change greatly. 
       
\begin{table}[t]
  \centering
    \caption{Average benefit compared with different $\alpha_i$} 
      \begin{tabular}{|c|c|c|c|c|c|}
        \hline  
        \multirow{4}{*}{Alpha} & $\alpha_i$ & $2/3$ & $1/2$ & $1/3$ & $1/4$ \\
        \cline{2-6}  
        ~& OMSTN \cite{2020Opinion} & 282 & 277 & 265 & 281 \\
        \cline{2-6} 
        ~& Fixed $\alpha_i$ & 254 & 277 & 293 & 343 \\
        \cline{2-6}
        ~& Adjusted $\alpha_i$ & 481 & 461 & 429 & 479 \\
        \hline  
        \multirow{4}{*}{OTC} & $\alpha_i$ & $2/3$ & $1/2$ & $1/3$ & $1/4$ \\
        \cline{2-6}  
        ~& OMSTN & 306 & 318 & 316 & 304 \\
        \cline{2-6} 
        ~& Fixed $\alpha_i$ & 270 & 318 & 367 & 386 \\
        \cline{2-6}
        ~& Adjusted $\alpha_i$ & 546 & 600 & 578 & 555 \\
        \hline 
        \multirow{4}{*}{WikiElec} & $\alpha_i$ & $2/3$ & $1/2$ & $1/3$ & $1/4$ \\
        \cline{2-6}  
        ~& OMSTN & 1165 & 1115 & 1144 & 1149 \\
        \cline{2-6} 
        ~& Fixed $\alpha_i$ & 974 & 1115 & 1305 & 1394 \\
        \cline{2-6}
        ~& Adjusted $\alpha_i$ & 1538 & 1468 & 1499 & 1510 \\
        \hline 
        \multirow{4}{*}{WikiRfa} & $\alpha_i$ & $2/3$ & $1/2$ & $1/3$ & $1/4$ \\
        \cline{2-6}  
        ~& OMSTN & 1010 & 1000 & 1023 & 968 \\
        \cline{2-6} 
        ~& Fixed $\alpha_i$ & 906 & 1000 & 1121 & 1098 \\
        \cline{2-6}
        ~& Adjusted $\alpha_i$ & 1340 & 1341 & 1375 & 1293 \\
        \hline 
      \end{tabular}     
  \label{tab:cap}
\end{table}

\subsection{Comparative Methods}
In order to better reflect the advantages of our results, as with \cite{2020Opinion}, we compared it to several other three 
heuristics. (i) Rand. We sort the nodes randomly. (ii) Trust. 
We represent the trust sum of the nodes by calculating the sum of the corresponding columns of the adjacency matrix. Nodes with a large 
amount of trust may have a strong ability to influence the opinions of other nodes. Therefore, we sort the nodes in descending 
order of the node trust sum. (iii) IO. If we can persuade those negative internal opinions to have positive internal opinions, 
then the overall opinion may increase. Therefore, we take the internal opinions of the node and sort the corresponding nodes in ascending order.
In order to ensure the fairness of the experimental results, we reproduced the other methods mentioned in \cite{2020Opinion} 
and carried out the experiment on the same computer. In the experiment, we set the parameter $\mu=200$, and the result is 
shown in Table \ref{tab:cap2} and Fig. \ref{fig:res1}.

As can be seen from the results, our approach has an overwhelming advantage over all datasets. A part of the performance curves 
are shown in Fig. \ref{fig:res}. The $x$-axis shows the budget amount, the 
left $y$-axis shows the overall opinion, and the right $y$-axis shows the unit benefit.

  \begin{table}[t]
    \centering
    \caption{Average benefit compared with other heuristics} \label{tab:cap2}
    \begin{tabular}{|c|c|c|c|c|c|}
    \hline  
    \multirow{5}{*}{Uniform} &~ & Alpha & OTC & WikiElec & WikiRfa \\
    \cline{2-6}  
    ~& Ours & 463 & 532 & 1550 & 1340 \\
    \cline{2-6} 
    ~& Rand & 41 & 21 & 13 & 29 \\
    \cline{2-6}
    ~& Trust & 225 & 83 & 275 & 298 \\
    \cline{2-6}
    ~& IO & 23 & 25 & 144 & 34 \\
    \hline  
    \multirow{5}{*}{Normal}&~& Alpha & OTC & WikiElec & WikiRfa \\
    \cline{2-6}  
    ~& Ours & 452 & 578 & 1581 & 1341 \\
    \cline{2-6} 
    ~& Rand & 26 & 20 & 87 & 14 \\
    \cline{2-6}
    ~& Trust & 250 & 112 & 205 & 378 \\
    \cline{2-6}
    ~& IO & 36 & 23 & 233 & 15 \\
    \hline 
    \multirow{5}{*}{Degree}&~& Alpha & OTC & WikiElec & WikiRfa \\
        \cline{2-6}  
        ~& Ours & 454 & 569 & 1545 & 1375 \\
        \cline{2-6} 
        ~& Rand & 18 & 14 & 29 & 40 \\
        \cline{2-6}
        ~& Trust & 250 & 92 & 297 & 329 \\
        \cline{2-6}
        ~& IO & 260 & 120 & 8 & 174 \\
        \hline 
    \end{tabular}
  \end{table}
      
\begin{figure}[t]
    \centerline{\epsfig{figure=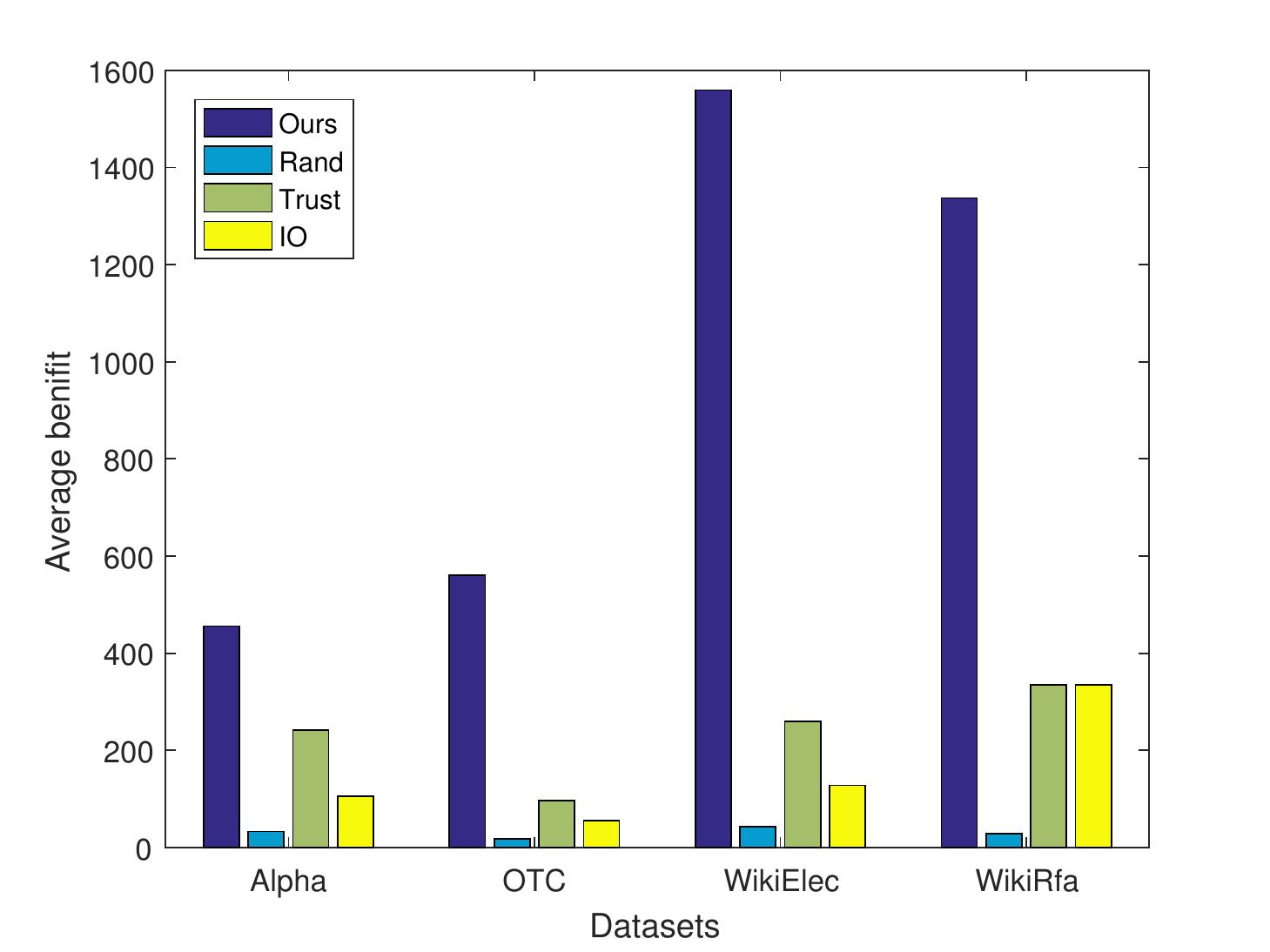,width=7cm}}
    \caption{Average benefit of different methods on four datasets.}
    \label{fig:res1}
\end{figure}


\begin{figure}[t]
  \begin{minipage}[b]{0.3\linewidth}
    \centering
    \centerline{\epsfig{figure=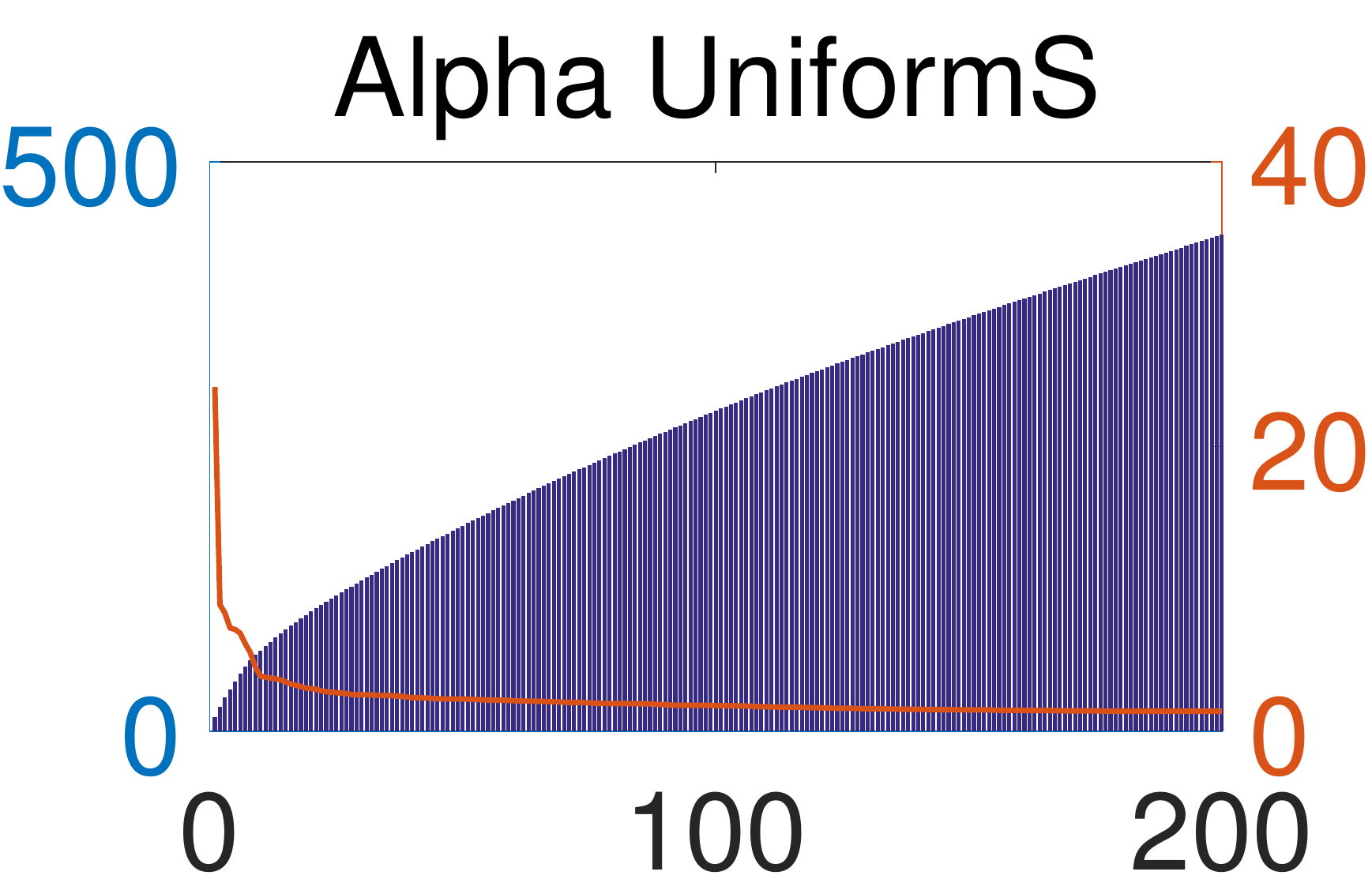,width=2.5cm}}
  \end{minipage}
  \begin{minipage}[b]{0.3\linewidth}
    \centering
    \centerline{\epsfig{figure=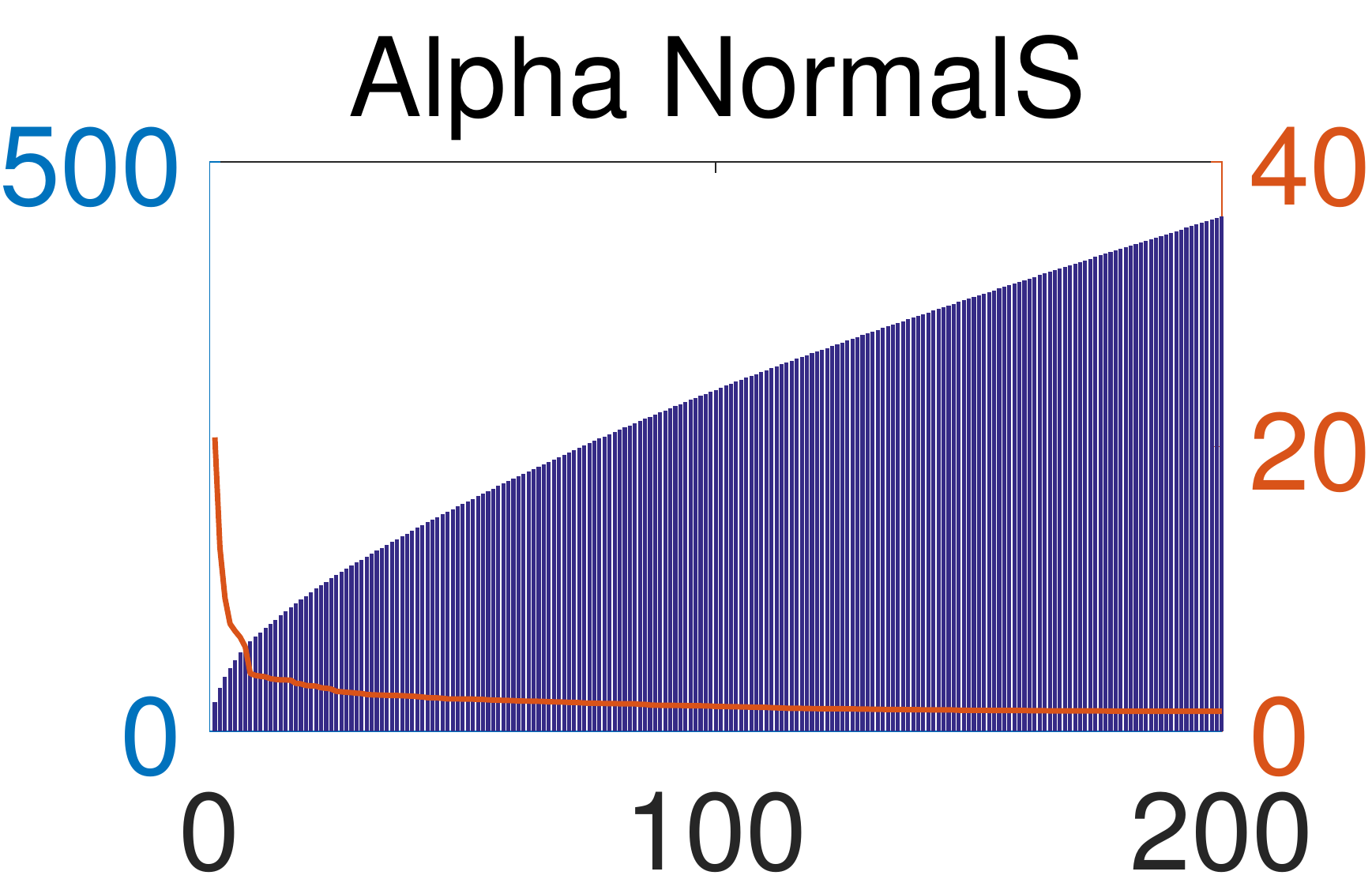,width=2.5cm}}
  \end{minipage}
  \begin{minipage}[b]{.3\linewidth}
    \centering
    \centerline{\epsfig{figure=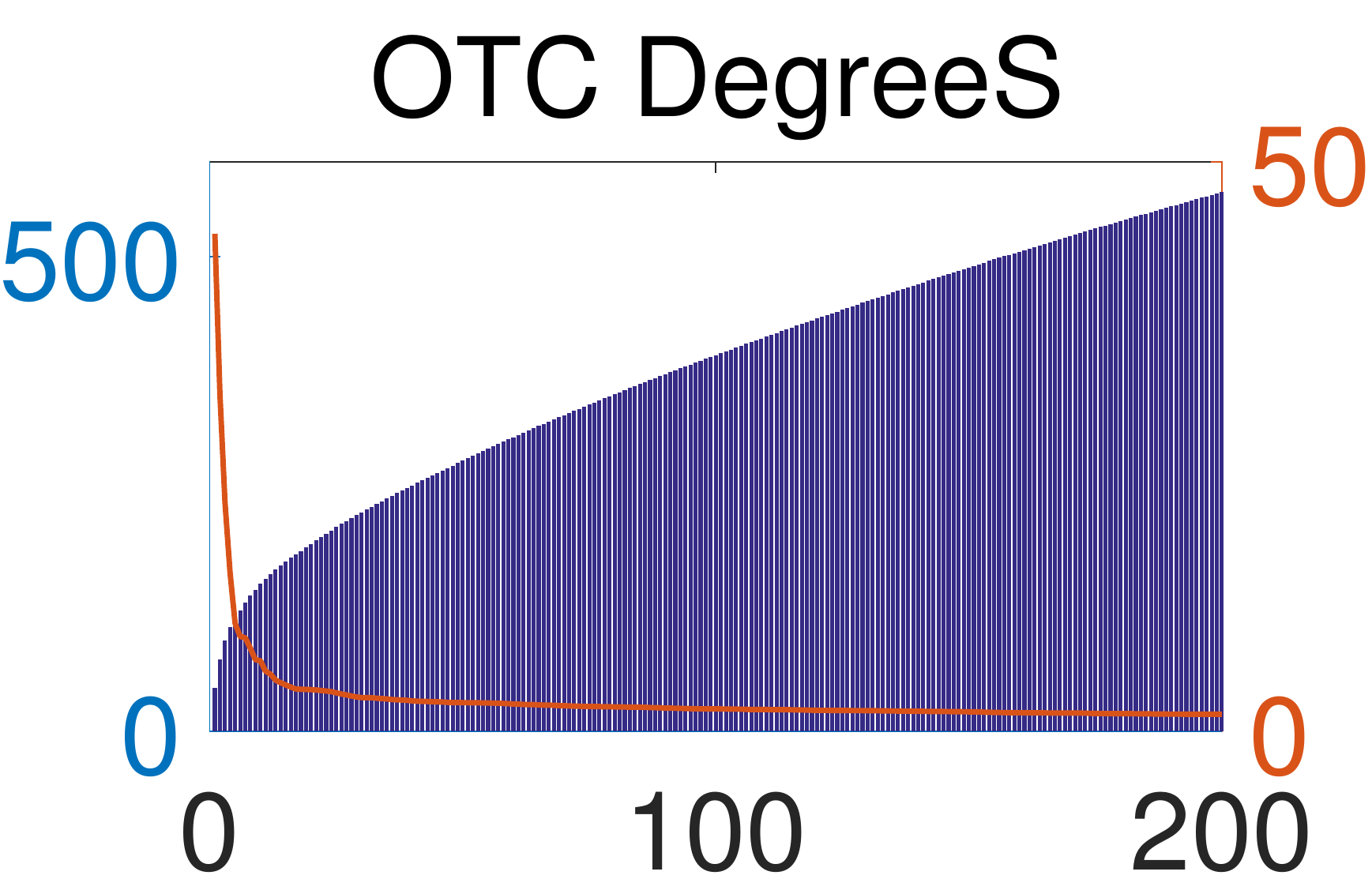,width=2.5cm}}
  \end{minipage}
  \caption{ A part of the performance curves.}
  \label{fig:res}
\end{figure}

\section{Conclusion}
In this paper, we have proposed a generalized opinions dynamics model for social trust networks to solve the internal 
opinion maximization problem. Different from other methods, our model GODM introduces a new confidence index and proposes 
a novel algorithm, which proved to be optimal. Extensive experimental results on four datasets demonstrate the effectiveness 
of our model.

\bibliographystyle{IEEEbib}
\bibliography{icme2021template}

\end{document}